\definecolor{darkgreen}{RGB}{0,100,0}
\definecolor{tan}{RGB}{210,180,140}
\definecolor{darkbrown}{RGB}{140,70,20}
\def\@setOxy O(#1,#2,#3)x(#4,#5,#6)y(#7,#8,#9)%
\def\tikz@plane@origin{\pgfpointxyz{#1}{#2}{#3}}%
   \def\tikz@plane@x{\pgfpointxyz{#4}{#5}{#6}}%
   \def\tikz@plane@y{\pgfpointxyz{#7}{#8}{#9}}%
\newtheorem{thm}{Theorem}
\newtheorem{prop}{Proposition}
\newtheorem{lemma}{Lemma}
\newtheorem{cor}{Corollary}
\theoremstyle{definition}
\newtheorem*{rmk}{Remark}
\def\N{{\mathbb N}}
\title{A New Take on Classic `Pen Problems'}
\author{David A. Nash} 
\date{Dec.\ 11, 2019}							
\begin{document}
\maketitle
\begin{abstract}
In this article we generalize the classic ``farm pen'' optimization problem from a first course in calculus in a handful of different ways.  We describe the solution to an $n$-dimensional rectangular variant, and then study the situation when the pens are either regular polygons or platonic solids.
\end{abstract}


\begin{mdframed}[backgroundcolor=black!10]
\begin{quote}
``43. A rectangular stockade is to be built which must have a certain area.  If a stone wall already constructed is available for one of the sides, find the dimensions which would make the cost of construction the least." \cite[pg. 134]{Granville}


``A farmer wants to build four fenced enclosures on his farm for his free-range ostriches.  To keep costs down, he is always interested in enclosing as much area as possible with a given amount of fence.  For the fencing projects in Exercises 35--38, determine how to set up each ostrich pen so that the maximum possible area is enclosed, and find this maximum area. ... 37. A rectangular ostrich pen built with 1000 feet of fencing material, divided into three equal sections by two interior fences that run parallel to the exterior side fences as shown next at the left." \cite[pg. 288]{TK}
\end{quote}
\end{mdframed}

``Pen Problems," such as the two given above, have been around in mathematics textbooks for over one hundred years.  The first, from Granville's calculus textbook (published in 1904), was the oldest that the author was able to dig up.  However, several significantly older textbooks have similar style three-dimensional problems involving the construction of rectangular boxes with no tops, thus, it seems likely that there are older examples of two-dimensional pen problems out there as well.  The second example is from Taalman and Kohn's much more modern entry (2014), and it includes three other variations on this same theme.

In each of these problems, we are tasked with maximizing the space we can enclose given some fixed constraint on the shape and size of the border.  In a first course in calculus, students are often taught to begin these problems by solving for one of the variables in the boundary constraint equation in order to reduce the measure of the space enclosed to a function of just one variable.  For example, we might visualize Taalman and Kohn's example from above as in Figure~\ref{fig:TK}.  We would then like to maximize the area $A = (3x)y$ given a fixed perimeter $1000 = 6x + 4y$.

\begin{figure}[ht!]
$$
\begin{tikzpicture}[scale=0.8]
    \draw[ultra thick] (0,0) -- (0,2) -- (1.8,2) -- (1.8,0) -- cycle;
    \draw[ultra thick] (0.6,0) -- (0.6,2);
    \draw[ultra thick] (1.2,0) -- (1.2,2);
    \draw[thick] (-0.3,1) node {$y$};
    \draw[thick] (0.3,-0.3) node {$x$};
    \draw[thick] (0.9,-0.3) node {$x$};
    \draw[thick] (1.5,-0.3) node {$x$};
\end{tikzpicture}
$$
\caption{Three ostrich pens of equal area.}
\label{fig:TK}
\end{figure}
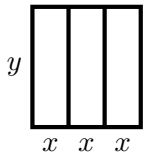

Given this particular setup, one can show the area is maximized when $x = 83.\bar{3}$ and $y = 125$.  What is perhaps more interesting though, is that the amount of fencing used in the vertical direction ($4y = 500$) and the amount used in the horizontal direction ($6x = 500$) are both exactly half of the total fencing allotted.  The same phenomenon occurs in Grenville's version, where -- given a fixed area to enclose -- the perimeter is minimized by letting the portion parallel to the existing wall be twice as long as the sides in the perpendicular direction (hence half the new fencing used is parallel to the wall, and half is perpendicular to it).  In fact, the same is true of the three-dimensional box problem as well -- the volume is maximized when the surface area perpendicular to each cardinal direction is exactly $\frac{1}{3}$ of the total available.



\begin{figure}[ht!]
\[
\begin{tikzpicture}[scale=0.6]
  \pgfmathsetmacro{\cubex}{6}
  \pgfmathsetmacro{\cubey}{2}
  \pgfmathsetmacro{\cubez}{2}
  \draw[thick,black,fill=darkbrown] (0,0,-\cubez) -- ++(-\cubex,0,0) -- ++(0,-\cubey,0) -- ++(\cubex,0,0) -- cycle;
  \draw[thick,black,fill=tan] (0,1,-\cubez) -- ++(-\cubex,0,0) -- ++(0,-1,0) -- ++(\cubex,0,0) -- cycle;
  \draw[thick,black,fill=darkbrown] (-\cubex,0,0) -- ++(0,0,-\cubez) -- ++(0,-\cubey,0) -- ++(0,0,\cubez) -- cycle;
  \draw[thick,black,fill=darkbrown] (-\cubex,\cubey,0) -- ++(0,0,-\cubez) -- ++(0,-\cubey,0) -- ++(0,0,\cubez) -- cycle;
  \draw[thick,black,fill=darkbrown] (0,-\cubey,0) -- ++(0,0,-\cubez) -- ++(-\cubex,0,0) -- ++(0,0,\cubez) -- cycle;
  \draw[thick,black,fill=darkbrown] (0,0,0) -- ++(-\cubex,0,0) -- ++(0,0,-\cubez) -- ++(\cubex,0,0) -- cycle;
  \draw[thick,black,fill=darkbrown] (0,\cubey,0) -- ++(0,0,-\cubez) -- ++(-\cubex,0,0) -- ++(0,0,\cubez) -- cycle;
  \draw[thick,black,fill=darkbrown] (0,0,0) -- ++(0,0,-\cubez) -- ++(0,-\cubey,0) -- ++(0,0,\cubez) -- cycle;
  \draw[thick,black,fill=darkbrown] (0,\cubey,0) -- ++(0,0,-\cubez) -- ++(0,-\cubey,0) -- ++(0,0,\cubez) -- cycle;
  \draw[thick] (-3,-2.5,0) node {$x$};
  \draw[thick] (0.5,-2,-0.5) node {$y$};
  \draw[thick] (0.3, -1, -2) node {$z$};
  \draw[thick] (0.3, 1, -2) node {$z$};
\end{tikzpicture}
\]
\caption{A two-tiered tv stand made out of wood and particleboard.}
\label{fig:tvstand}
\end{figure}
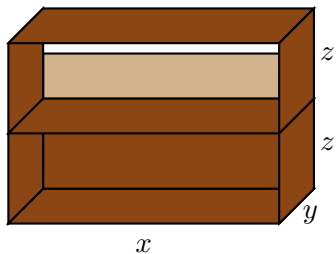

Another common variant involves designs using different materials for some of the walls (each with different costs associated to them).  For example, we might design a tv stand (see Figure~\ref{fig:tvstand}) as two rectangular spaces which are open at the front.  Most of the walls will be made out of wood (at \$3 per sqft), but the back wall of the top section will be half particle board (at \$2 per sqft) and half open (to allow cables to pass out the back).  Given these design choices, we'd like to maximize the volume we can hold (regardless of the aesthetics) given a fixed \emph{cost} of \$81 (rather than a fixed surface area).


Under these conditions, the volume we'd like to maximize is $V = xy(2z)$, and the cost constraint is $81 = 3(3)(xy) + 4(3)(yz) + [1(3) + \frac{1}{2}(2)](xz) = 9xy + 12yz + 4xz$.  We'll again spare the details, but the volume can be maximized here by taking $x=3$, $y=1$, and $z=\frac{9}{4}$.  More importantly, observe that the cost of the horizontal walls (those perpendicular to the $z$-axis) is exactly $9(3)(1) = \$27$, the cost of the walls perpendicular to the $y$-axis is $12(1)(\frac{9}{4}) = \$27$, and the cost of the walls perpendicular to the $x$-axis is $4(3)(\frac{9}{4}) = \$27$.  So we maximize our volume by splitting the cost evenly among each of the directions!

Does the optimal solution always correspond to splitting the constraint equally among each of the directions? In Section~\ref{sec:rect}, we begin by addressing this question in a rectangular setup that has been generalized to any number of dimensions.
We follow that up in Sections~\ref{sec:poly} and \ref{sec:packing} by considering chains of pens made out of equal sized regular polygons and more compact arrangements of triangles, squares, and hexagons.  Finally, in Section~\ref{sec:platonic}, we consider the equivalent 3-dimensional situation with chains of equal sized platonic solid pens.  At each stage, we also compare to using circles or spheres where it is impossible to share any of the boundary.

\section{The $\bm{n}$-dimensional Rectangular Pen Problem}\label{sec:rect}
In $n$-dimensions ($n \geq 2$), given positive integers $b_1, \dots, b_n \in \N$, we might create a $b_1 \times b_2 \times \cdots \times b_n$ grid of equal size rectangular $n$-dimensional spaces each with identical side lengths $x_1, \dots, x_n \geq 0$.  Thus, the hypervolume that we'd like to maximize is
$$V = \prod_{j=1}^n (x_jb_j)$$

We will consider the case in which our constraint is a fixed cost, $C$, under the assumption that the cost of each possible material per unit of hypersurface area is constant.  Observe,  if we focus only on walls that are perpendicular to the $i$-th direction, a single wall for a single chamber has hypersurface area $\prod_{j \neq i} x_j$.  Perhaps some of the chamber walls already exist (or partially exist), or we may design the grid so that only certain ones use particular materials, etc.  Regardless of the particular features, we may count the total cost of these walls by adding up a linear combination of the material costs with weights coming from a \emph{count} (possibly fractional) of the number of walls of each type -- just as we observed in the tv stand example above.

Importantly, this is a linear combination of various constants that have been established within the design parameters, thus we may represent that linear combination with a single constant $c_i$.  For simplicity in what follows, we set $C_i = \frac{c_i}{\prod_{j \neq i} b_j}$ so that cost of all walls perpendicular to the $i$-th direction is $C_i \prod_{j \neq i} (x_jb_j)$ and the total cost is exactly:
$$C = \sum_{i=1}^n C_i \prod_{j \neq i} (x_jb_j)$$

\begin{lemma}\label{lem:Ci>0} If $C_i = 0$ for any $i$, then the volume $V$ can be made arbitrarily large.
\end{lemma}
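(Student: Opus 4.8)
The plan is to exhibit, when $C_k = 0$ for some index $k$, a one-parameter family of feasible wall configurations along which the cost $C$ stays fixed but the hypervolume $V$ grows without bound. The idea is simply to inflate the other $n-1$ side lengths and shrink $x_k$ just enough to pay for the increase.

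First I would note that the $i=k$ term of the cost sum is $C_k\prod_{j\neq k}(x_jb_j)=0$, so the constraint becomes $C=\sum_{i\neq k}C_i\prod_{j\neq i}(x_jb_j)$, and every surviving term still contains the factor $x_kb_k$ (since $k\neq i$). Pulling that factor out, $C=(x_kb_k)\sum_{i\neq k}C_i\prod_{j\neq i,\,k}(x_jb_j)$. Now I would substitute $x_j=t/b_j$ for each $j\neq k$, so that $x_jb_j=t$; the constraint collapses to $C=(x_kb_k)\,t^{\,n-2}B$, where $B:=\sum_{i\neq k}C_i$ does not depend on $t$. Assuming $B>0$, solving gives $x_k=C/(b_k\,t^{\,n-2}B)\ge 0$, a genuine nonnegative side length for every $t>0$ that keeps the cost exactly equal to $C$. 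Substituting back, $V=\prod_j(x_jb_j)=(x_kb_k)\,t^{\,n-1}=Ct/B$, which runs to infinity as $t\to\infty$. (Note $x_k$ stays constant when $n=2$ and tends to $0$ when $n\ge 3$, but it is nonnegative throughout, so each member of the family is admissible.)

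The only loose end is the degenerate sub-case $B=0$, i.e.\ $C_i=0$ for all $i\neq k$: together with $C_k=0$ this forces every $C_i$ to vanish, so the cost of any configuration is $0$; reading $C=0$ accordingly, all the side lengths are unconstrained and letting each $x_j\to\infty$ makes $V\to\infty$. (Throughout one uses that the material costs are nonnegative, $C_i\ge 0$, which is exactly what makes ``$C_i=0$'' the relevant boundary case; this lemma then licenses assuming $C_i>0$ for every $i$ in what follows.) I do not anticipate a real obstacle --- the argument is elementary --- and the one point worth getting right is choosing the family so that $V$ is \emph{exactly} proportional to the parameter while $C$ is \emph{exactly} constant, which the normalization $x_jb_j=t$ achieves.
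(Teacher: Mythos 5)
Your proposal is correct and follows essentially the same route as the paper's proof: both exhibit a one-parameter family in which the $n-1$ coordinates with nonzero cost are scaled together and the remaining coordinate is solved for from the constraint, making $V$ linear in the parameter. Your normalization $x_jb_j=t$ and your explicit treatment of the degenerate sub-case $B=0$ are minor refinements of the same argument.
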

\begin{proof} Without loss of generality, we assume $C_n = 0$, then every term in our constraint equation contains a factor of $x_n$.  If we take $x_i= x_1$ for all $i < n$, and 
$$x_n = \frac{C}{\sum_{i<n} C_i b_n \prod_{j \neq i,n} (x_j b_j)} = \frac{C}{x_1^{n-2} \sum_{i < n} C_i \prod_{j \neq i} b_j}.$$
This will satisfy the constraint equation for all possible $x_1 > 0$ and will have volume
$$V = \frac{x_1^{n-1} \prod_{j=1}^n b_j \cdot C}{x_1^{n-2} \sum_{i<n} C_i \prod_{j \neq i} b_j},$$
which is proportional to $x_1$ and thus can be made arbitrarily large.
\end{proof}

Given Lemma~\ref{lem:Ci>0}, in what follows we will assume that $C_i > 0$ for all $i$ so that a finite maximum volume exists.

\begin{thm} When constructing a $b_1 \times b_2 \times \dots \times b_n$ grid of rectangular $n$-dimensional spaces subject to a fixed cost, $C$, the hypervolume is maximized when the cost of the walls perpendicular to each direction is exactly $\frac{C}{n}$.
\end{thm}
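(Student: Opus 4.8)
The plan is to reduce the problem to the AM--GM inequality via the substitution $y_j = x_j b_j$. Under this change of variables the hypervolume becomes $V = \prod_{j=1}^n y_j$ and the cost constraint becomes $C = \sum_{i=1}^n T_i$, where $T_i = C_i \prod_{j \neq i} y_j$ is exactly the cost of the walls perpendicular to the $i$-th direction. A feasible point with some $x_j = 0$ has $V = 0$ and so cannot be a maximizer, since we will exhibit feasible points with $V > 0$; hence it suffices to maximize over the region in which every $y_j > 0$, where in particular each $T_i > 0$ because $C_i > 0$ for all $i$ (the standing assumption after Lemma~\ref{lem:Ci>0}).

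The engine of the argument is the identity
$$\prod_{i=1}^n T_i \;=\; \Bigl(\prod_{i=1}^n C_i\Bigr)\prod_{i=1}^n \prod_{j \neq i} y_j \;=\; \Bigl(\prod_{i=1}^n C_i\Bigr)\Bigl(\prod_{j=1}^n y_j\Bigr)^{n-1} \;=\; \Bigl(\prod_{i=1}^n C_i\Bigr) V^{n-1},$$
which holds because each $y_j$ appears in exactly $n-1$ of the products $\prod_{j \neq i} y_j$. Feeding the positive numbers $T_1, \dots, T_n$ into AM--GM then gives
$$\frac{C}{n} \;=\; \frac{1}{n}\sum_{i=1}^n T_i \;\ge\; \Bigl(\prod_{i=1}^n T_i\Bigr)^{1/n} \;=\; \Bigl(\prod_{i=1}^n C_i\Bigr)^{1/n} V^{(n-1)/n},$$
so that $V^{n-1} \le C^n / \bigl(n^n \prod_i C_i\bigr)$, a finite (and positive) bound since $n \ge 2$. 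Crucially, equality in AM--GM holds if and only if $T_1 = \dots = T_n$, and since these $n$ numbers sum to $C$, that is the same as $T_i = C/n$ for every $i$. The theorem thus follows provided this equality case is attained by an honest choice of positive side lengths.

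Verifying attainability is the only real computation, and it is short: the equations $T_i = C/n$ and $T_k = C/n$ force $C_i y_k = C_k y_i$, so $y_j = C_j \lambda$ for a single $\lambda > 0$; substituting back gives $T_i = \lambda^{n-1}\prod_{j=1}^n C_j$ for every $i$, so $T_i = C/n$ becomes $\lambda^{n-1} = C / \bigl(n \prod_j C_j\bigr)$, which has a unique positive root. The corresponding point $x_j = C_j \lambda / b_j$ is feasible and realizes the bound above, and conversely any maximizer must achieve equality in AM--GM and therefore has cost $C/n$ in each direction. I do not expect a genuine obstacle; the one place deserving a careful sentence is the treatment of boundary feasible points with some zero side length --- they contribute $V = 0$ and so are harmless --- which is what licenses applying AM--GM only where all the $T_i$ are strictly positive and guarantees the maximum is attained rather than merely approached.
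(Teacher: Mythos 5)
Your proof is correct, but it takes a genuinely different route from the paper. The paper applies the method of Lagrange multipliers to $V$ and the cost constraint, derives $\frac{C_i}{x_i b_i} = \frac{C_j}{x_j b_j}$ for all $i,j$ from the critical-point equations, and then substitutes back to show each directional cost equals $\frac{C}{n}$. You instead substitute $y_j = x_j b_j$, observe that the directional costs $T_i$ satisfy $\prod_i T_i = \bigl(\prod_i C_i\bigr)V^{n-1}$, and apply AM--GM to the $T_i$. The two arguments land on the same optimal point ($y_j$ proportional to $C_j$), but yours buys something the paper's does not: a global, self-contained bound $V^{n-1} \le C^n/\bigl(n^n \prod_i C_i\bigr)$ together with an explicit feasible point attaining it. The paper's Lagrange argument only identifies interior critical points and tacitly assumes a maximum exists and occurs at one of them --- a nontrivial assumption here, since for $n \ge 3$ the constraint surface is unbounded; your approach disposes of that issue for free, and the equality case of AM--GM also gives you the converse direction (any maximizer must split the cost equally) without further work. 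The one computation worth double-checking is the attainability step, and yours is right: $T_i = T_k$ forces $C_i y_k = C_k y_i$, hence $y_j = C_j\lambda$ and a unique positive $\lambda$ from $\lambda^{n-1} = C/\bigl(n\prod_j C_j\bigr)$. This is a clean, elementary, and in fact more complete argument than the one in the paper.
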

\begin{proof}
Recall, from the method of Lagrange multipliers, we know that the maximum volume, subject to our fixed cost constraint, should occur when, for all $1 \leq i \leq n$, we have $\frac{\partial V}{\partial x_i} = \lambda \frac{\partial C}{\partial x_i}$ for some parameter $\lambda \neq 0$.  Thus, for any such $i$, we have  
$$\frac{\partial V}{\partial x_i} = \lambda \frac{\partial C}{\partial x_i} \implies b_i\prod_{j \neq i} (x_j b_j) = \lambda \left[\sum_{k \neq i} \left( b_i C_k \prod_{j \neq k, i} (x_j b_j) \right)\right]$$
Since $V$ is zero whenever $x_i=0$ for any $i$, we will ignore these potential solutions and assume that $x_i > 0$ for all $i$.  Thus, we may divide both sides by $b_i \prod_{j \neq i} (x_j b_j)$ to obtain $1 = \lambda \sum_{k \neq i} \frac{C_k}{x_kb_k}$ for each $i$.  Equating the sums for two different indices, $i \neq j$, we have $\lambda \sum_{k \neq i} \frac{C_k}{x_kb_k} = \lambda \sum_{k \neq j} \frac{C_k}{x_kb_k}$ which implies that $\frac{C_j}{x_j b_j} = \frac{C_i}{x_i b_i}$ for all $i$ and $j$.  Thus, after substituting, our equation becomes $1 = \lambda (n-1) \frac{C_i}{x_i b_i}$ for all $i$.  Hence, $x_ib_i = \lambda (n-1) C_i$ for all $i$ and we may substitute this fact into our cost equation to obtain
%
%
%
%
%
%
$$C = \sum_{i=1}^n C_i \prod_{j \neq i} \lambda(n-1)C_j = n \lambda^{n-1} (n-1)^{n-1} \prod_{i=1}^n C_i.$$
Moreover, isolating the cost of the walls that are perpendicular to the $i$-th direction gives $C_i \prod_{j \neq i} (a_j b_j) = C_i \prod_{j \neq i} \lambda (n-1) C_j = \lambda^{n-1} (n-1)^{n-1} C_1 \cdots C_n$, which is exactly 
$\frac{C}{n}$.
\end{proof}

\begin{cor}[Surface Area]\label{cor:rect}
When constructing a $b_1 \times b_2 \times \dots \times b_n$ grid of rectangular $n$-dimensional spaces subject to a fixed amount of hypersurface area, $S$, the hypervolume is maximized when the hypersurface area perpendicular to each direction is exactly $\frac{S}{n}$.
\end{cor}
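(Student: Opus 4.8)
The plan is to recognize Corollary~\ref{cor:rect} as the special case of the Theorem in which every wall is built from the same ``unit-cost'' material, so that ``cost'' and ``hypersurface area'' literally coincide. First I would set up the dictionary between the two problems. Fixing a direction $i$, a single chamber wall perpendicular to the $i$-th axis has hypersurface area $\prod_{j \neq i} x_j$, and in a $b_1 \times \cdots \times b_n$ grid the sheets perpendicular to the $i$-th axis sit at the $b_i + 1$ coordinates $0, x_i, 2x_i, \dots, b_i x_i$, each sheet consisting of $\prod_{j \neq i} b_j$ such chamber walls. Hence the total (honest, integer) count of walls perpendicular to the $i$-th direction is $c_i = (b_i + 1)\prod_{j \neq i} b_j$, every one of them costing $1$ per unit of hypersurface area, and following the normalization of Section~\ref{sec:rect} we set $C_i = \frac{c_i}{\prod_{j \neq i} b_j} = b_i + 1$.

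With this choice of constants, the total hypersurface area of the grid is exactly $S = \sum_{i=1}^n C_i \prod_{j \neq i} (x_j b_j)$, which is precisely the constraint equation of the Theorem with $C$ replaced by $S$. Since each $b_i \in \N$ satisfies $b_i \geq 1$, we have $C_i = b_i + 1 \geq 2 > 0$ for every $i$, so we are never in the degenerate situation of Lemma~\ref{lem:Ci>0} and a finite maximum genuinely exists. Applying the Theorem verbatim, the hypervolume is maximized when the ``cost'' of the walls perpendicular to each direction equals $\frac{S}{n}$; but since every wall has unit cost per unit area, that cost is by construction the hypersurface area perpendicular to that direction, which is exactly the assertion of the corollary.

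The only point that requires any care — and the only place the argument could slip — is verifying that the pure surface-area problem really does embed in the cost framework with strictly \emph{positive} coefficients $C_i$; once that bookkeeping is confirmed, the corollary is immediate and no new optimization is needed. (One could equally well rerun the Lagrange-multiplier computation from the proof of the Theorem directly with $C_i = b_i + 1$ and $C = S$, but invoking the Theorem as a black box is cleaner and avoids repeating the work.)
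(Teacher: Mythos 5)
Your proposal is correct and is essentially the paper's own argument: the paper likewise observes that $(b_i+1)$ counts the full-length walls perpendicular to the $i$-th direction and sets $C_i = b_i+1$ to turn the cost constraint into the surface-area constraint, then invokes the Theorem. Your additional bookkeeping (deriving $C_i$ from the raw wall count $c_i=(b_i+1)\prod_{j\neq i}b_j$ and checking $C_i\geq 2>0$ so Lemma~\ref{lem:Ci>0} is avoided) is a nice, slightly more careful rendering of the same one-line reduction.
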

\begin{proof} $(b_i+1)$ counts the number of full-length walls -- each with hypersurface area $\prod_{j \neq i} (a_j b_j)$ -- that are perpendicular to the $i$-th direction.  Thus, setting $C_i = (b_i+1)$ converts the cost equation to an equation for the total hypersurface area.
\end{proof}



\section{Regular Polygon Pens}\label{sec:poly}
If the goal is to maximize the enclosed space, we might want to consider other kinds of designs as well -- for example, those that use non-rectangular shapes.  Back in two dimensions, it is well-known that a single circular enclosure will give you the best ratio of area to perimeter.  However, in creating multiple pens, those circular designs won't be able to share sides, which seems counterproductive.  More precisely, given a fixed perimeter $P$ to work with, the perimeter of $k$ circular pens must satisfy $P = 2\pi r k$, which implies that the enclosed area is 
$$A_k(\infty) = k\pi \left(\frac{P}{2\pi k}\right)^2  = \frac{P^2}{4\pi k}.$$

If we choose to use regular $n$-sided polygons instead, then we can always create a chain of $k$ of these polygons so that each shares a side with its neighbors, see Figure~\ref{fig:pentagons}.  Recall that for a single pen, the ratio of area to perimeter grows with $n$.  However, with multiple pens, the amount of shared perimeter shrinks as $n$ grows.  Our goal is thus to determine the best balancing point for these counteractive forces. 

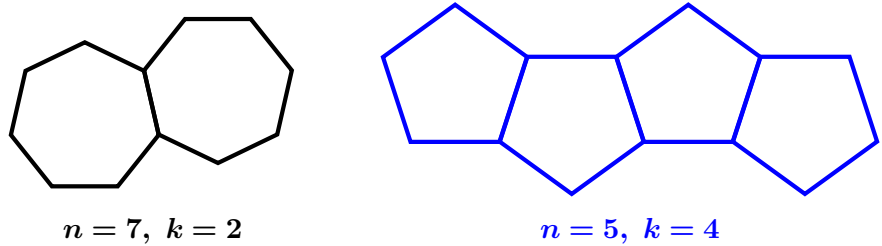
\begin{figure}[ht!]
$$
\begin{tikzpicture}
    \node[draw=none,minimum size=2cm,regular polygon,regular polygon sides=7] (a) {};
    \draw[ultra thick] (a.corner 1) -- (a.corner 2) -- (a.corner 3) -- (a.corner 4) -- (a.corner 5) -- (a.corner 6) -- (a.corner 7) -- cycle;
    \node[draw=none,minimum size=2cm,regular polygon,regular polygon sides=7,rotate=25.75](b){};
    \draw[ultra thick] let \p{a}=(b.corner 1), \p{b}=(b.corner 2), \p{c}=(b.corner 3), \p{d}=(b.corner 4), \p{e}=(b.corner 5), \p{f}=(b.corner 6), \p{g}=(b.corner 7) in
    {[shift={(1.77,0.41)}] (\p{a}) -- (\p{b}) -- (\p{c}) -- (\p{d}) -- (\p{e}) -- (\p{f}) -- (\p{g}) -- cycle};
    \draw[ultra thick] (0.9,-1.5) node {$\bm{n=7,~ k=2}$};
\end{tikzpicture}
\qquad \quad
\begin{tikzpicture}
    \node[draw=none,minimum size=2cm,regular polygon,regular polygon sides=5] (a) {};
    \draw[ultra thick, blue] (a.corner 1) -- (a.corner 2) -- (a.corner 3) -- (a.corner 4) -- (a.corner 5) -- cycle;
    \node[draw=none,minimum size=2cm,regular polygon,regular polygon sides=5,rotate=36](b){};
    \draw[ultra thick, blue] let \p{a}=(b.corner 1), \p{b}=(b.corner 2), \p{c}=(b.corner 3), \p{d}=(b.corner 4), \p{e}=(b.corner 5) in
    {[shift={(1.55,-0.5)}] (\p{a}) -- (\p{b}) -- (\p{c}) -- (\p{d}) -- (\p{e}) -- cycle}
    {[shift={(4.65,-0.5)}] (\p{a}) -- (\p{b}) -- (\p{c}) -- (\p{d}) -- (\p{e}) -- cycle};
    \draw[ultra thick, blue] let \p{a}=(a.corner 1), \p{b}=(a.corner 2), \p{c}=(a.corner 3), \p{d}=(a.corner 4), \p{e}=(a.corner 5) in
    {[shift={(3.1,0)}] (\p{a}) -- (\p{b}) -- (\p{c}) -- (\p{d}) -- (\p{e}) -- cycle};
    \draw[ultra thick, blue] (2.325,-2) node {$\bm{n=5,~ k=4}$};
\end{tikzpicture}
$$
\caption{A chain of two heptagons, and a chain of four pentagons.}
\label{fig:pentagons}
\end{figure}

It is well-known \cite{Johnson} that a regular $n$-sided polygon with side length $s$ has area $\frac{1}{4}ns^2 \cot(\frac{\pi}{n})$ and a perimeter of $ns$.  Thus, if we create a chain of $k$ regular $n$-gons, 
then the total area and perimeter are exactly:
$$A = \frac{k}{4} \cdot ns^2\cot(\frac{\pi}{n}) \quad \text{and} \quad P = kns - (k-1)s = s(k(n-1) +1)$$
Solving for $s$ in the perimeter equation and substituting, we find that (for a fixed integer $k \in \N$), the total area of this arrangement is
$$A_k(n) = \frac{P^2 k}{4} \cdot \frac{n \cot(\frac{\pi}{n})}{[k(n-1)+1]^2} \quad (n \geq 3).$$
In other words, given a fixed perimeter, $P$, with which to construct these pens, the area depends only on which polygon we choose.

If we consider $A_k(x)$ as a continuous function ($x \geq 3$) rather than a discrete one, we may take the derivative using the quotient rule.  After multiplying the numerator and denominator by $\sin^2(\frac{\pi}{x})$ and applying the double angle identity, we have:
$$A_k'(x) = \frac{P^2k}{4} \cdot \frac{\frac{1}{2}\sin(\frac{2\pi}{x})[1 - k(x+1)] + \frac{\pi}{x}[k(x-1)+1]}{\sin^2(\frac{\pi}{x})[k(x-1)+1]^3}$$
Since $x \geq 3$, the sign of this derivative, for each $k \in \N$, is completely determined by the sign of $N_k(x):=\frac{1}{2}\sin(\frac{2\pi}{x})[1 - k(x+1)] + \frac{\pi}{x}[k(x-1)+1]$.  This can be rearranged by putting all terms with a $k$ together to get:
$$N_k(x) = k\left[\frac{(n-1)\pi}{x} - \frac{x+1}{2}\sin\left(\frac{2\pi}{x}\right)\right] + \frac{\pi}{x} + \frac{1}{2}\sin\left(\frac{2\pi}{x}\right)$$

In order to explore $N_k(x)$ it will be helpful to replace the sine terms with rational functions instead.  It is well-known that $\sin(\theta) < \theta$ for all $\theta >0$, thus $\sin(\frac{2\pi}{x}) < \frac{2\pi}{x}$ for all $x > 0$.  More important to our discussion, however, is the following result:

\begin{lemma}\label{lem:bound}
$\sin\left(\frac{2\pi}{x}\right) > \frac{2\pi}{x+1}$ for all $x > 
7.5$.
\end{lemma}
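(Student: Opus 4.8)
The plan is to substitute $\theta = \tfrac{2\pi}{x}$, which converts the hypothesis $x > 7.5$ into $0 < \theta < \tfrac{4\pi}{15}$ and, since $x+1 = \tfrac{2\pi+\theta}{\theta}$, converts the claimed inequality into $\sin\theta > \tfrac{2\pi\theta}{2\pi+\theta}$. Every quantity here is positive, so cross-multiplying shows it suffices to prove
\[
\theta\sin\theta \;>\; 2\pi\bigl(\theta - \sin\theta\bigr) \qquad\text{for } 0 < \theta < \tfrac{4\pi}{15}.
\]

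Next I would replace $\sin\theta$ using the elementary two-sided estimate $\theta - \tfrac{\theta^3}{6} < \sin\theta < \theta$ (valid for all $\theta > 0$; the lower bound follows by differentiating twice and using $\sin\theta \le \theta$). The lower bound gives $\theta\sin\theta > \theta^2 - \tfrac{\theta^4}{6}$, and the same bound, rewritten as $\theta - \sin\theta < \tfrac{\theta^3}{6}$, gives $2\pi(\theta - \sin\theta) < \tfrac{\pi\theta^3}{3}$. Hence it is enough to verify the \emph{polynomial} inequality $\theta^2 - \tfrac{\theta^4}{6} \ge \tfrac{\pi\theta^3}{3}$, i.e.\ (dividing by $\theta^2 > 0$)
\[
1 - \frac{\theta^2}{6} \;\ge\; \frac{\pi\theta}{3} \qquad\text{for } 0 < \theta < \tfrac{4\pi}{15}.
\]
The left side minus the right side is strictly decreasing on $(0,\infty)$, so the inequality on the whole interval reduces to the single evaluation at $\theta = \tfrac{4\pi}{15}$, where it becomes $1 \ge \tfrac{68\pi^2}{675}$; since $\pi^2 < \tfrac{675}{68}$ this holds (strictly), which closes the chain back up to the original statement.

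The only delicate point is that final numerical step: the slack at $\theta = \tfrac{4\pi}{15}$ is small (of order one percent), so the argument genuinely needs the sharp rational estimate $\sin\theta > \theta - \tfrac{\theta^3}{6}$ and not a cruder one, and the concluding comparison should be written with an explicit bound such as $\pi^2 < 9.87$ so no appeal to a calculator is hidden. Everything before that is routine algebra. It is also worth noting, to motivate the cutoff, that the bound is genuinely false for small $x$ (it already fails at $x = 7$), so some numerical threshold is unavoidable and $7.5$ is simply one that the above estimates are strong enough to reach.
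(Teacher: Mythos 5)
Your proposal is correct and is essentially the paper's own argument in disguise: both hinge on the Taylor lower bound $\sin\theta > \theta - \tfrac{\theta^3}{6}$, and your final polynomial inequality $1 - \tfrac{\theta^2}{6} \ge \tfrac{\pi\theta}{3}$ becomes, after substituting $\theta = \tfrac{2\pi}{x}$ and clearing denominators, exactly the paper's condition $6\pi x^3 - 4\pi^3 x^2 - 4\pi^3 x \ge 0$. Your write-up is somewhat more explicit about the tight numerical check at the threshold ($1 \ge \tfrac{68\pi^2}{675}$), which the paper leaves as ``one can directly demonstrate,'' but the route is the same.
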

\begin{proof} Recall that the Taylor series representation of sine is $\displaystyle \sum_{i=0}^\infty (-1)^i \frac{x^{2i+1}}{(2i+1)!}$. Hence, using the second Taylor polynomial, $\sin\left(\frac{2\pi}{x}\right) > \frac{2\pi}{x} - \frac{4\pi^3}{3x^3}$ for all $x > 0$.  In addition, observe that $\frac{2\pi}{x+1} = \frac{2\pi}{x} - \frac{2\pi}{x(x+1)}$.  Thus, it will follow that $\sin\left(\frac{2\pi}{x}\right) > \frac{2\pi}{x+1}$ whenever $\frac{4\pi^3}{3x^3} < \frac{2\pi}{x(x+1)}$, or rearranging, when $6\pi x^3 - 4\pi^3x^2 - 4\pi^3 x > 0$ and one can directly demonstrate that this holds for all $x \geq 7.5$. (Note: this inequality is not sharp)
\end{proof}
Applying Lemma~\ref{lem:bound} and the fact that $\sin(\frac{2\pi}{x}) < \frac{2\pi}{x}$, we have:
$$D_k(x) < k\left[\frac{(n-1)\pi}{x} - \frac{x+1}{2} \cdot \frac{2 \pi}{x+1} \right] + \frac{\pi}{x} + \frac{1}{2} \cdot \frac{2\pi}{x} = \frac{\pi(2-k)}{x} \text{~for all $x \geq 7.5$.}$$
Hence, $D_k(x) < 0$ (and therefore, $A_k'(x) < 0$) for all $x \geq 7.5$ and $k \geq 2$.
It follows that, for $k \geq 2$, we have $A_k(n+1) < A_k(n)$ for all $n \geq 8$.  Thus, for multiple pens, any polygons with more than 8 sides will always be worse than options with fewer sides.  In fact, for these polygons, the more sides they have, the less total area is enclosed.  Now, only $A_k(3)$, $A_k(4)$, $A_k(5)$, $A_k(6)$, $A_k(7)$, and $A_k(8)$ are left to consider. 

Observe that $A_k(n)<A_k(m)$ exactly when $\frac{P^2 k}{4} \cdot \frac{n \cot(\frac{\pi}{n})}{[k(n-1)+1]^2} < \frac{P^2 k}{4} \cdot \frac{m \cot(\frac{\pi}{m})}{[k(m-1)+1]^2}$, which can be rearranged as $\frac{k(m-1)+1}{k(n-1)+1} < \sqrt{\frac{m\cot(\frac{\pi}{m})}{n \cot(\frac{\pi}{n})}}$.  As a function of $k$, the rational function on the left is always increasing if $m>n$ and always decreasing if $m<n$, so we need only find the value of $k$ (if any) at which this function passes the constant value to which it is being compared.  

Since we only care about positive integer values of $k$, we will refrain from reporting more exact values for these inequalities.  For example, $A_3(k) < A_4(k)$ exactly when $\frac{3k+1}{2k+1} < \sqrt{\frac{4}{\sqrt{3}}}$ which is true for all $k > \frac{3^{1/4}-2}{4-3^{5/4}}$ ($\approx -13.21$), however, for our purposes it is enough to say $A_3(k) < A_4(k)$ for all $k \geq 1$.  Interpreting this conclusion, no matter how many pens we wish to build, it would always be better to use squares than equilateral triangles.  

Continuing in this way (again, reporting only positive integer values), one can show that $A_k(8) < A_k(7)$ for all $k \geq 1$, $A_k(7) < A_k(6)$ for all $k \geq 3$, that $A_k(6) < A_k(5)$ for all $k \geq 3$, and that $A_k(5) < A_k(4)$ for all $k \geq 5$.  Thus:
\begin{thm} When given a fixed amount of perimeter to create a chain of $k$ equal sized pens out of regular $n$-gons, the area is maximized when using heptagons if $k=2$, pentagons if $k=3$ or $4$, and squares for all $k \geq 5$.
\end{thm}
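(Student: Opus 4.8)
The plan is to feed the two ingredients already in hand into a short finite bookkeeping argument. From the derivative estimate we know that for every $k \geq 2$ the function $A_k(x)$ is strictly decreasing on $[7.5,\infty)$, so $A_k(8) > A_k(9) > A_k(10) > \cdots$ and no $n \geq 9$ can be optimal; this reduces the theorem to identifying, for each fixed $k \geq 2$, the largest of the six numbers $A_k(3), A_k(4), A_k(5), A_k(6), A_k(7), A_k(8)$. The second ingredient is the comparison criterion $A_k(n) < A_k(m) \iff \dfrac{k(m-1)+1}{k(n-1)+1} < \sqrt{\dfrac{m\cot(\pi/m)}{n\cot(\pi/n)}}$, in which the rational function of $k$ on the left is strictly increasing if $m>n$ and strictly decreasing if $m<n$; this means each such comparison flips at most once as $k$ runs through the positive integers, so it is governed by a single threshold.

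First I would run the criterion on the five consecutive pairs with $n \in \{3,\dots,8\}$. Computing the constants $\sqrt{m\cot(\pi/m)/(n\cot(\pi/n))}$ and locating where the monotone rational function crosses them gives: $A_k(4) > A_k(3)$ for every $k$; $A_k(5) > A_k(4)$ exactly when $k \leq 4$; $A_k(6) > A_k(5)$ exactly when $k \leq 2$; $A_k(7) > A_k(6)$ exactly when $k \leq 2$; and $A_k(7) > A_k(8)$ for every $k \geq 2$. The ``larger $n$ loses'' halves of these are the inequalities already recorded above; the ``larger $n$ wins'' halves, needed for small $k$, come from the same computation.

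Then I would assemble the cases. For $k=2$ the first four comparisons give $A_2(3) < A_2(4) < A_2(5) < A_2(6) < A_2(7)$, while the fifth comparison together with the monotonicity on $[7.5,\infty)$ gives $A_2(7) > A_2(8) > A_2(9) > \cdots$; hence the heptagon is optimal. For $k = 3$ or $k = 4$ the first two comparisons give $A_k(3) < A_k(4) < A_k(5)$ and the last three give $A_k(5) > A_k(6) > A_k(7) > A_k(8) > \cdots$; hence the pentagon is optimal. For $k \geq 5$ the second comparison has reversed, so $A_k(3) < A_k(4)$ and $A_k(4) > A_k(5) > A_k(6) > A_k(7) > A_k(8) > \cdots$; hence the square is optimal. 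In every case the one-sided nature of each threshold forces the sequence $n \mapsto A_k(n)$ to increase and then decrease exactly once, so the chain of consecutive inequalities genuinely pins down the global maximizer.

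I expect the only real difficulty to be numerical tightness: in the pairs $(5,6)$, $(6,7)$ and $(7,8)$ the threshold in $k$ lands between $1$ and $3$, so one must evaluate $n\cot(\pi/n)$ precisely enough to be certain that the small integer $k=2$ really does sit on the side the argument requires. Everything else is routine: the transcendental constants for $n = 3,\dots,8$ are explicit, and the monotonicity of $\dfrac{k(m-1)+1}{k(n-1)+1}$ in $k$ is a one-line quotient-rule check.
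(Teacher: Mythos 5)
Your proposal is correct and follows essentially the same route as the paper: use the derivative bound to discard all $n\geq 9$ for $k\geq 2$, then settle the six remaining candidates by the single-threshold comparison criterion on consecutive pairs and assemble the chains of inequalities for $k=2$, $k=3,4$, and $k\geq 5$. Your reported thresholds match the paper's (and your restriction of $A_k(8)<A_k(7)$ to $k\geq 2$ is in fact slightly more careful than the paper's ``for all $k\geq 1$,'' which fails at $k=1$), and your caution about numerical tightness in the $(6,7)$ comparison at $k=2$ is well placed.
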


\begin{rmk}[Circles]
Recall that the limit as $n$ goes to infinity of an $n$-gon is a circle.  Thus, our exploration of the derivative of $A_k(x)$ above demonstrates that, using circles is worse than using any regular polygon with 8 or more sides for all $k \geq 2$.  Using the same technique as above, observe that $\frac{P^2}{4\pi k} = A_k(\infty) < A_k(4) = \frac{P^2k}{4} \cdot \frac{4}{(3k+1)^2}$ exactly when $\sqrt{\frac{1}{4\pi}} < \frac{k}{3k+1}$ which is true for all $k \geq 2$.  Hence, given our comparisons above, using circles is worse than using polygons with 4 or more sides as well for all $k \geq 2$.  Interestingly however, $A_k(\infty) < A_k(3)$ when $\sqrt{\frac{1}{\pi\sqrt{3}}} < \frac{k}{2k+1}$ which is true only for $k \geq 4$.  Thus, it is more efficient to use two or three circles than two or three triangles, in spite of the inability to share any of the perimeter.
\end{rmk}

\section{Pen Packing}\label{sec:packing}
Of course, with equilateral triangles, squares, and regular hexagons we can do better by arranging our pens so that they share multiple sides.  Harary and Harborth \cite{HH} proved, in each of these cases, that using a spiral pattern corresponds to the arrangement of $k$ polygons with the most shared sides, see Figure~\ref{fig:HH}.


\newpage
\begin{figure}[ht!]
$$
\begin{tikzpicture}
    \node[draw=none,minimum size=1cm,regular polygon,regular polygon sides=3] (a) {};
    \draw[thick] (a.corner 1) -- (a.corner 2) -- (a.corner 3) -- cycle;
    \draw[thick] (a.center) node {3};
    \draw[thick] let \p{a}=(a.corner 1), \p{b}=(a.corner 2), \p{c}=(a.corner 3), \p{d}=(a.center) in
    {[shift={(.9,0)}] (\p{a}) -- (\p{b}) -- (\p{c}) -- cycle}
    {[shift={(.9,0)}] (\p{d}) node {5}}
    {[shift={(1.8,0)}] (\p{a}) -- (\p{b}) -- (\p{c}) -- cycle}
    {[shift={(1.8,0)}] (\p{d}) node {23}}
    {[shift={(0.45,-.78)}] (\p{a}) -- (\p{b}) -- (\p{c}) -- cycle}
    {[shift={(0.45,-0.78)}] (\p{d}) node {1}}
    {[shift={(1.35,-0.78)}] (\p{a}) -- (\p{b}) -- (\p{c}) -- cycle}
    {[shift={(1.35,-0.78)}] (\p{d}) node {7}}
    {[shift={(0.9,-1.56)}] (\p{a}) -- (\p{b}) -- (\p{c}) -- cycle}
    {[shift={(0.9,-1.56)}] (\p{d}) node {9}}
    {[shift={(0,-1.56)}] (\p{a}) -- (\p{b}) -- (\p{c}) -- cycle}
    {[shift={(0,-1.56)}] (\p{d}) node {11}}
    {[shift={(-0.45,-.78)}] (\p{a}) -- (\p{b}) -- (\p{c}) -- cycle}
    {[shift={(-0.45,-0.78)}] (\p{d}) node {13}}
    {[shift={(-0.9,0)}] (\p{a}) -- (\p{b}) -- (\p{c}) -- cycle}
    {[shift={(-0.9,0)}] (\p{d}) node {15}}
    {[shift={(-0.45,.78)}] (\p{a}) -- (\p{b}) -- (\p{c}) -- cycle}
    {[shift={(-0.45,0.78)}] (\p{d}) node {17}}
    {[shift={(0.45,.78)}] (\p{a}) -- (\p{b}) -- (\p{c}) -- cycle}
    {[shift={(0.45,0.78)}] (\p{d}) node {19}}
    {[shift={(1.35,.78)}] (\p{a}) -- (\p{b}) -- (\p{c}) -- cycle}
    {[shift={(1.35,0.78)}] (\p{d}) node {21}}
    {[shift={(2.25,-0.78)}] (\p{a}) -- (\p{b}) -- (\p{c}) -- cycle}
    {[shift={(2.25,-0.78)}] (\p{d}) node {25}}
    ;
    \node[draw=none,minimum size=1cm,regular polygon,regular polygon sides=3,rotate=180](b){};
    \draw[thick] let \p{a}=(b.corner 1), \p{b}=(b.corner 2), \p{c}=(b.corner 3), \p{d}=(a.center) in
    {[shift={(0.45,0.26)}] (\p{a}) -- (\p{b}) -- (\p{c}) -- cycle}
    {[shift={(.45,0.26)}] (\p{d}) node {4}}
    {[shift={(1.35,0.26)}] (\p{a}) -- (\p{b}) -- (\p{c}) -- cycle}
    {[shift={(1.35,0.26)}] (\p{d}) node {22}}
    {[shift={(0,-.52)}] (\p{a}) -- (\p{b}) -- (\p{c}) -- cycle}
    {[shift={(0,-0.52)}] (\p{d}) node {2}}
    {[shift={(0.9,-0.52)}] (\p{a}) -- (\p{b}) -- (\p{c}) -- cycle}
    {[shift={(0.9,-0.52)}] (\p{d}) node {6}}
    {[shift={(1.35,-1.3)}] (\p{a}) -- (\p{b}) -- (\p{c}) -- cycle}
    {[shift={(1.35,-1.3)}] (\p{d}) node {8}}
    {[shift={(.45,-1.3)}] (\p{a}) -- (\p{b}) -- (\p{c}) -- cycle}
    {[shift={(.45,-1.3)}] (\p{d}) node {10}}
    {[shift={(-.45,-1.3)}] (\p{a}) -- (\p{b}) -- (\p{c}) -- cycle}
    {[shift={(-.45,-1.3)}] (\p{d}) node {12}}
    {[shift={(-0.9,-.52)}] (\p{a}) -- (\p{b}) -- (\p{c}) -- cycle}
    {[shift={(-0.9,-0.52)}] (\p{d}) node {14}}
    {[shift={(-0.45,0.26)}] (\p{a}) -- (\p{b}) -- (\p{c}) -- cycle}
    {[shift={(-.45,0.26)}] (\p{d}) node {16}}
    {[shift={(0,1.04)}] (\p{a}) -- (\p{b}) -- (\p{c}) -- cycle}
    {[shift={(0,1.04)}] (\p{d}) node {18}}
    {[shift={(0.9,1.04)}] (\p{a}) -- (\p{b}) -- (\p{c}) -- cycle}
    {[shift={(0.9,1.04)}] (\p{d}) node {20}}
    {[shift={(1.8,-0.52)}] (\p{a}) -- (\p{b}) -- (\p{c}) -- cycle}
    {[shift={(1.8,-0.52)}] (\p{d}) node {24}};
    \draw [ultra thick, -latex] (2.25,-1.15) -- (1.85, -1.7);
\end{tikzpicture}
\qquad
\begin{tikzpicture}[scale=0.7]
    \draw[thick] (0,0) -- (4,0) -- (4,4) -- (0,4) -- (0,0) ++ (1,0) -- (1,4) ++ (1,0) -- (2,0) ++ (1,0) -- (3,4) ++ (1,-1) -- (0,3) ++ (0,-1) -- (4,2) ++ (0,-1) -- (0,1);
    \draw[thick] (0.5,0.5) node {10};
    \draw[thick] (1.5,0.5) node {9};
    \draw[thick] (2.5,0.5) node {8};
    \draw[thick] (3.5,0.5) node {7};
    \draw[thick] (0.5,1.5) node {11};
    \draw[thick] (1.5,1.5) node {2};
    \draw[thick] (2.5,1.5) node {1};
    \draw[thick] (3.5,1.5) node {6};
    \draw[thick] (0.5,2.5) node {12};
    \draw[thick] (1.5,2.5) node {3};
    \draw[thick] (2.5,2.5) node {4};
    \draw[thick] (3.5,2.5) node {5};
    \draw[thick] (0.5,3.5) node {13};
    \draw[thick] (1.5,3.5) node {14};
    \draw[thick] (2.5,3.5) node {15};
    \draw[thick] (3.5,3.5) node {16};
    \draw[thick] (4,4) -- (5,4) -- (5,3) -- (4,3);
    \draw[thick] (4.5,3.5) node {17};
    \draw[ultra thick, -latex] (4.5,2.8) -- (4.5, 2);
\end{tikzpicture}
\qquad
\begin{tikzpicture}[scale=0.8]
    \node[draw=none,minimum size=0.8cm,regular polygon,regular polygon sides=6] (a) {};
    \draw[thick] let \p{a}=(a.corner 1), \p{b}=(a.corner 2), \p{c}=(a.corner 3), \p{d}=(a.corner 4), \p{e}=(a.corner 5), \p{f}=(a.corner 6), \p{g}=(a.center), \p{r}=($(a.corner 1) - (a.corner 4)$), \p{u} = ($(a.corner 2) - (a.corner 5)$) in
    {[shift={(0,0)}] (\p{a}) -- (\p{b}) -- (\p{c}) -- (\p{d}) -- (\p{e}) -- (\p{f}) -- cycle}
    {[shift={(0,0)}] (\p{g}) node {1}}
    {[shift={(0.76,-0.44)}] (\p{a}) -- (\p{b}) -- (\p{c}) -- (\p{d}) -- (\p{e}) -- (\p{f}) -- cycle}
    {[shift={(0.76,-0.44)}] (\p{g}) node {2}}
    {[shift={(0,-0.88)}] (\p{a}) -- (\p{b}) -- (\p{c}) -- (\p{d}) -- (\p{e}) -- (\p{f}) -- cycle}
    {[shift={(0,-0.88)}] (\p{g}) node {3}}
    {[shift={(-0.76,-0.44)}] (\p{a}) -- (\p{b}) -- (\p{c}) -- (\p{d}) -- (\p{e}) -- (\p{f}) -- cycle}
    {[shift={(-0.76,-0.44)}] (\p{g}) node {4}}
    {[shift={(-0.76,0.44)}] (\p{a}) -- (\p{b}) -- (\p{c}) -- (\p{d}) -- (\p{e}) -- (\p{f}) -- cycle}
    {[shift={(-0.76,0.44)}] (\p{g}) node {5}}
    {[shift={(0,0.88)}] (\p{a}) -- (\p{b}) -- (\p{c}) -- (\p{d}) -- (\p{e}) -- (\p{f}) -- cycle}
    {[shift={(0,0.88)}] (\p{g}) node {6}}
    {[shift={(0.76,0.44)}] (\p{a}) -- (\p{b}) -- (\p{c}) -- (\p{d}) -- (\p{e}) -- (\p{f}) -- cycle}
    {[shift={(0.76,0.44)}] (\p{g}) node {7}}
    {[shift={(1.52,0)}] (\p{a}) -- (\p{b}) -- (\p{c}) -- (\p{d}) -- (\p{e}) -- (\p{f}) -- cycle}
    {[shift={(1.52,0)}] (\p{g}) node {8}}
    {[shift={(1.52,-0.88)}] (\p{a}) -- (\p{b}) -- (\p{c}) -- (\p{d}) -- (\p{e}) -- (\p{f}) -- cycle}
    {[shift={(1.52,-0.88)}] (\p{g}) node {9}}
    {[shift={(0.76,-1.32)}] (\p{a}) -- (\p{b}) -- (\p{c}) -- (\p{d}) -- (\p{e}) -- (\p{f}) -- cycle}
    {[shift={(0.76,-1.32)}] (\p{g}) node {10}}
    {[shift={(0,-1.76)}] (\p{a}) -- (\p{b}) -- (\p{c}) -- (\p{d}) -- (\p{e}) -- (\p{f}) -- cycle}
    {[shift={(0,-1.76)}] (\p{g}) node {11}}
    {[shift={(-0.76,-1.32)}] (\p{a}) -- (\p{b}) -- (\p{c}) -- (\p{d}) -- (\p{e}) -- (\p{f}) -- cycle}
    {[shift={(-0.76,-1.32)}] (\p{g}) node {12}}
    {[shift={(-1.52,-0.88)}] (\p{a}) -- (\p{b}) -- (\p{c}) -- (\p{d}) -- (\p{e}) -- (\p{f}) -- cycle}
    {[shift={(-1.52,-0.88)}] (\p{g}) node {13}}
    {[shift={(-1.52,0)}] (\p{a}) -- (\p{b}) -- (\p{c}) -- (\p{d}) -- (\p{e}) -- (\p{f}) -- cycle}
    {[shift={(-1.52,0)}] (\p{g}) node {14}}
    {[shift={(-1.52,0.88)}] (\p{a}) -- (\p{b}) -- (\p{c}) -- (\p{d}) -- (\p{e}) -- (\p{f}) -- cycle}
    {[shift={(-1.52,0.88)}] (\p{g}) node {15}}
    {[shift={(-0.76,1.32)}] (\p{a}) -- (\p{b}) -- (\p{c}) -- (\p{d}) -- (\p{e}) -- (\p{f}) -- cycle}
    {[shift={(-0.76,1.32)}] (\p{g}) node {16}}
    {[shift={(0,1.76)}] (\p{a}) -- (\p{b}) -- (\p{c}) -- (\p{d}) -- (\p{e}) -- (\p{f}) -- cycle}
    {[shift={(0,1.76)}] (\p{g}) node {17}}
    {[shift={(0.76,1.32)}] (\p{a}) -- (\p{b}) -- (\p{c}) -- (\p{d}) -- (\p{e}) -- (\p{f}) -- cycle}
    {[shift={(0.76,1.32)}] (\p{g}) node {18}}
    {[shift={(1.52,0.88)}] (\p{a}) -- (\p{b}) -- (\p{c}) -- (\p{d}) -- (\p{e}) -- (\p{f}) -- cycle}
    {[shift={(1.52,0.88)}] (\p{g}) node {19}}
    {[shift={(2.28,0.44)}] (\p{a}) -- (\p{b}) -- (\p{c}) -- (\p{d}) -- (\p{e}) -- (\p{f}) -- cycle}
    {[shift={(2.28,0.44)}] (\p{g}) node {20}}
    ;
    \draw[ultra thick, -latex] (2.28,-.1) -- (2.28, -1);
\end{tikzpicture}
$$
\caption{Spiral arrangements of 25 triangles, 17 squares, and 20 hexagons.}
\label{fig:HH}
\end{figure}
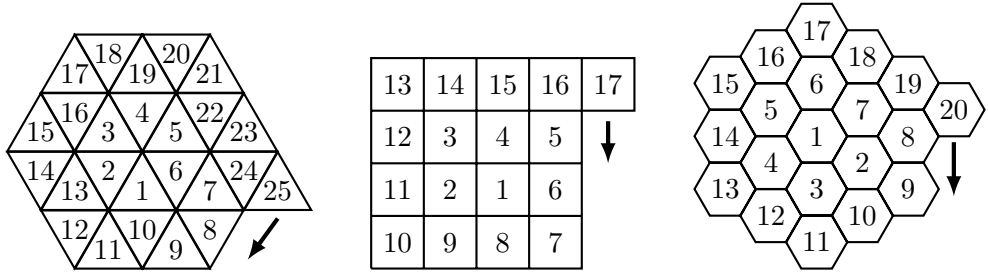

In fact, they provide a formula for the number of sides in each arrangement:
\begin{prop}\emph{\cite[Theorems 2, 3, and 4]{HH}}\label{prop:HH}\\
\emph{(1)} The number of sides in a spiral arrangement of $k$ triangles is $k + \lceil\frac{1}{2}(k + \sqrt{6k})\rceil$.\\
\emph{(2)} The number of sides in a spiral arrangement of $k$ squares is $2k + \lceil2\sqrt{k}\rceil$.\\
\emph{(3)} The number of sides in a spiral arrangement of $k$ hexagons is $3k + \lceil\sqrt{12k-3}\rceil$.
\end{prop}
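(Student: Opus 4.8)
Since Proposition~\ref{prop:HH} is quoted verbatim from \cite{HH}, the honest reference is to those theorems; but here is how I would reconstruct them. The first move is to change what is being counted. In any edge-to-edge arrangement of $k$ congruent regular $n$-gons in the plane, no edge belongs to three cells, so each of the $s$ adjacencies fuses two of the $kn$ polygon edges into a single segment; hence the number of distinct sides is $kn-s$ and the outer boundary has length $kn-2s$. Minimizing the number of sides is therefore the same as maximizing the number of shared sides $s$, equivalently minimizing the perimeter of the associated polyiamond ($n=3$), polyomino ($n=4$), or polyhex ($n=6$). So it suffices to prove two things: that the minimum perimeter over all $k$-cell configurations has exactly the stated value, and that the Harary--Harborth spiral attains it.

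For the lower bound on the perimeter I would run a bounding-region argument adapted to each lattice. On the square lattice this is quickest: a $k$-cell polyomino spanning $w$ columns and $h$ rows fits in a $w\times h$ box, so $k\le wh$, while a standard sweep argument shows its perimeter is at least $2(w+h)\ge 4\sqrt{wh}\ge 4\sqrt{k}$; since $w+h$ is an integer this already forces perimeter $\ge 2\lceil 2\sqrt{k}\rceil$, which rearranges (via perimeter $=4k-2s$ and sides $=4k-s$) to ``at least $2k+\lceil 2\sqrt{k}\rceil$ sides.'' The triangular and hexagonal lattices each carry three families of parallel lattice lines and a more intricate boundary geometry; I would count how many lines of each family the configuration meets, bound $k$ above by the resulting triangular- or hexagonal-array product, bound the perimeter below by a fixed multiple of the total number of lines met, and then optimize by a symmetric AM--GM, obtaining the $\sqrt{6k}$ and $\sqrt{12k-3}$ terms after the same automatic integer rounding.

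For the matching upper bound I would dissect the spiral itself. It passes through a sequence of ``complete'' configurations — the near-square rectangles $\lceil m/2\rceil\times\lfloor m/2\rfloor$ for squares, the side-$m$ triangles for triangles, the centered hexagons of side $m$ for hexagons — at which the perimeter ($2m$, $3m$, and $6(2m-1)$ respectively) is immediate and is checked against the formula; between two consecutive complete configurations the spiral adds cells one at a time along the current outer layer. A short local count shows exactly how many new sides each added cell contributes: the cell that opens a fresh straight stretch of the boundary contributes one fixed amount, each later cell along that stretch contributes less, and rounding a convex corner saves a side. The running total is thus an explicit function of $k$, piecewise linear between the complete configurations, and verifying that it coincides with $k+\lceil\tfrac12(k+\sqrt{6k})\rceil$, $2k+\lceil 2\sqrt{k}\rceil$, and $3k+\lceil\sqrt{12k-3}\rceil$ reduces to a finite case check on the residue of $k$ modulo the length of the layer being filled; induction on the layer index then finishes it.

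The hard part will be the lower bound. The asymptotic rates $4\sqrt{k}$, $\sqrt{6k}$, $4\sqrt{3k}$ fall out of AM--GM almost for free, but pinning down the exact ceilings — and, for the polyiamond and polyhex cases, correctly accounting for the three interacting families of lattice lines and the several possible boundary turning angles — is the delicate discrete-isoperimetric step, and it is precisely this that Harary and Harborth carry out in \cite{HH}.
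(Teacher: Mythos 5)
The paper never proves Proposition~\ref{prop:HH}: the statement is imported verbatim from Harary and Harborth, and the bracketed citation is the entire justification, so there is no internal proof to compare yours against. You correctly recognize this, and your opening reduction is sound and worth making explicit: in an edge-to-edge arrangement with $s$ shared sides the number of distinct sides is $kn-s$ and the boundary length is $kn-2s$, so minimizing the side count is exactly the discrete isoperimetric problem for polyiamonds, polyominoes, and polyhexes. Your polyomino case is essentially complete --- the bound $k\le wh$ together with perimeter $\ge 2(w+h)$ and the integrality of $w+h$ does give perimeter $\ge 2\lceil 2\sqrt{k}\rceil$, hence at least $2k+\lceil 2\sqrt{k}\rceil$ sides, and the spiral through near-square rectangles attains it. But the other two lower bounds and the layer-by-layer verification of the spiral are still only outlines: for polyiamonds the three families of lattice lines meet upward- and downward-pointing triangles asymmetrically (which is why the answer involves $\frac{1}{2}(k+\sqrt{6k})$ rather than a clean $c\sqrt{k}$), and the ``finite case check on the residue of $k$'' is precisely where Harary and Harborth's work lives, so the exact ceilings are asserted rather than derived. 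In short, you have a correct plan plus one fully worked case; since the paper itself treats the result as a black box, deferring the remaining details to \cite{HH} is appropriate, but the sketch should not be mistaken for a proof of the exact formulas.
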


In the discussion that follows, we'll use $A^\ast_k(n)$ to denote the total area of each spiral arrangement of $k$ triangles ($n=3$), squares ($n=4$), or hexagons ($n=6$).  
Of course, since sharing additional sides is more efficient, it follows that $A^\ast_k(n) \geq A_k(n)$ for each $n$.
By (1), we know that the number of sides for $k$ triangles is always greater than or equal to $\frac{1}{2}(3k + \sqrt{6k})$.  The more sides we have, the shorter they'll each have to be, hence with fixed perimeter $P$, we have side length $s \leq\frac{2P}{3k+\sqrt{6k}}$. This implies that the area of an arrangement of $k$ triangles, which is equal to $k\cdot \frac{\sqrt{3}}{4} s^2$, satisfies 
$$A^\ast_k(3) \leq \frac{\sqrt{3}k}{4} \cdot \left(\frac{2P}{3k+\sqrt{6k}}\right)^2 = \frac{\sqrt{3} \cdot kP^2}{(3k+\sqrt{6k})^2}.$$
By (2), the number of sides for $k$ squares is between $2k+2\sqrt{k}$ and $2k+2\sqrt{k}+1$, thus with fixed perimeter $P$, we have side length $\frac{P}{2k+2\sqrt{k}+1} \leq s \leq \frac{P}{2k+2\sqrt{k}}$.  Hence, the area of an arrangement of $k$ squares ($k\cdot s^2$) satisfies
$$\frac{kP^2}{(2k+2\sqrt{k}+1)^2} \leq A^\ast_k(4) \leq \frac{kP^2}{(2k+2\sqrt{k})^2}.$$
Finally, from (3), the number of sides for an arrangement of $k$ hexagons is less than or equal to $3k+\sqrt{12k-3}+1$, implying that $s \geq \frac{P}{3k+\sqrt{12k-3}+1}$ and therefore,
$$A^\ast_k(6) \geq \frac{6\sqrt{3} \cdot kP^2}{4(3k+\sqrt{12k-3}+1)^2}.$$

Combining these observations, we have $A^\ast_k(3) \leq \frac{\sqrt{3}\cdot kP^2}{(3k+\sqrt{6k})^2} < \frac{kP^2}{(2k+2\sqrt{k}+1)^2} \leq A^\ast_k(4)$ exactly when $\frac{2k+2\sqrt{k}+1}{3k+\sqrt{6k}} < 3^{-1/4}$.  This function of $k$ is decreasing for all $k>0$,  is above $3^{-1/4}$ when $k=4$, and below $3^{-1/4}$ when $k=5$.  Thus, $A^\ast_k(3)< A^\ast_k(4)$ for all $k \geq 5$. 
Moreover, since spiral arrangements of triangles are equivalent (in terms of the number of shared sides) to chains for $k \leq 5$, we also have $A^\ast_k(3) = A_k(3) < A_k(4) \leq A^\ast_k(4)$ for $k=1$, $2$, $3$, and $4$.  Hence, $A^\ast_k(3) < A^\ast_k(4)$ for all $k \geq 1$.

Comparing squares and hexagons, $A^\ast_k(4) \leq \frac{kP^2}{(2k+2\sqrt{k})^2} < \frac{6\sqrt{3} \cdot kP^2}{4(3k+\sqrt{12k-3}+1)^2} \leq A^\ast_k(6)$, exactly when $\frac{3k+\sqrt{12k-3}+1}{k+\sqrt{k}} < 108^{1/4}$.  This function is decreasing for all $k \geq 1$, is above $108^{1/4}$ at $k=6$ and below $108^{1/4}$ at $k=7$.  Thus, $A^\ast_k(4) < A^\ast_k(6)$ for all $k \geq 7$. 
In the previous section, we showed that $A^\ast_k(4) = A_k(4) < A_k(6) = A^\ast_k(6)$ for $k=1$ and $2$ since these are just chains.  For the rest, we can count the number of sides and compare directly: $A^\ast_3(4) = \frac{3}{100}P^2 < \frac{9\sqrt{3}}{450}P^2 = A^\ast_3(6)$, and $A^\ast_4(4) = \frac{1}{36}P^2 < \frac{6\sqrt{3}}{361}P^2 = A^\ast_4(6)$, and $A^\ast_5(4) = \frac{1}{45}P^2 < \frac{15\sqrt{3}}{1058}P^2 = A^\ast_5(6)$, and $A^\ast_6(4) = \frac{6}{289}P^2 < \frac{\sqrt{3}}{81}P^2 = A^\ast_6(6)$.  Hence $A^\ast_k(4) < A^\ast_k(6)$ for all $k \geq 1$ and, to summarize:

\begin{thm}
When given a fixed amount of perimeter to create an optimally packed arrangement of $k$ equal sized pens, the area enclosed by hexagons will be larger than that enclosed by squares, which in turn will be larger than that enclosed by equilateral triangles for all $k \geq 1$.
\end{thm}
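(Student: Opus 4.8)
The plan is to prove the two strict inequalities $A^\ast_k(3) < A^\ast_k(4)$ and $A^\ast_k(4) < A^\ast_k(6)$ separately for every $k \geq 1$ and then concatenate them. In each comparison the strategy is identical: use the side-count formulas of Proposition~\ref{prop:HH} to bound the common side length $s$ (for fixed perimeter $P$, the product of $s$ and the number of sides is exactly $P$), translate these into bounds on $A^\ast_k(n) = k \cdot \tfrac14 n s^2 \cot(\tfrac{\pi}{n})$, show the resulting comparison of bounds holds for all sufficiently large $k$, and then mop up the finitely many small $k$ either with the chain comparisons from Section~\ref{sec:poly} (valid precisely while the Harary--Harborth spiral still coincides with a linear chain) or by substituting exact integer side-counts into the area formula.

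For $A^\ast_k(3) < A^\ast_k(4)$: part (1) of Proposition~\ref{prop:HH} shows the $k$-triangle spiral has at least $\tfrac12(3k+\sqrt{6k})$ sides, hence $A^\ast_k(3) \leq \frac{\sqrt3\,kP^2}{(3k+\sqrt{6k})^2}$; part (2) shows the $k$-square spiral has at most $2k+2\sqrt k+1$ sides, hence $A^\ast_k(4) \geq \frac{kP^2}{(2k+2\sqrt k+1)^2}$. Comparing these two bounds reduces the claim to $\frac{2k+2\sqrt k+1}{3k+\sqrt{6k}} < 3^{-1/4}$. I would verify that this function of $k$ is decreasing for $k > 0$ (after clearing denominators and squaring this becomes a polynomial inequality) and that it falls below $3^{-1/4}$ somewhere between $k=4$ and $k=5$, so the inequality holds for all $k \geq 5$. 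For $k = 1,2,3,4$ the optimal triangle arrangement is just a chain, so $A^\ast_k(3) = A_k(3)$, and Section~\ref{sec:poly} gives $A_k(3) < A_k(4) \leq A^\ast_k(4)$.

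For $A^\ast_k(4) < A^\ast_k(6)$: combine $A^\ast_k(4) \leq \frac{kP^2}{(2k+2\sqrt k)^2}$ (the square spiral has at least $2k+2\sqrt k$ sides) with $A^\ast_k(6) \geq \frac{6\sqrt3\,kP^2}{4(3k+\sqrt{12k-3}+1)^2}$ (the hexagon spiral has at most $3k+\sqrt{12k-3}+1$ sides), reducing the claim to $\frac{3k+\sqrt{12k-3}+1}{k+\sqrt k} < 108^{1/4}$; this is again decreasing in $k$ and crosses the constant between $k=6$ and $k=7$, so it holds for all $k \geq 7$. The cases $k=1,2$ are chains, where Section~\ref{sec:poly} gives $A_k(4) < A_k(6)$; for $k = 3,4,5,6$ I would evaluate the ceilings in Proposition~\ref{prop:HH} explicitly, obtain exact side-counts, and compare the resulting exact areas directly.

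The main obstacle is not any single estimate but the casework at small $k$: the coarse bounds on $s$ are wasteful exactly in the regime where side-sharing most favors triangles, so those cases cannot be folded into the asymptotic argument and must be closed by a sharper tool, and one must check that the large-$k$ threshold coming from the bound comparison and the small-$k$ range covered by chains or direct enumeration overlap with no gap (triangle spirals are chains only for $k \leq 5$, square and hexagon spirals only for $k \leq 2$, so the direct-enumeration cases $k = 3,4,5,6$ in the second comparison are genuinely needed). A secondary technical point is the monotonicity in $k$ of the two radical-rational functions, which I would reduce to polynomial inequalities after clearing denominators and squaring.
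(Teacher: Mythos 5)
Your proposal is correct and follows essentially the same route as the paper: the same two bound comparisons reducing to $\frac{2k+2\sqrt{k}+1}{3k+\sqrt{6k}} < 3^{-1/4}$ (holding for $k \geq 5$) and $\frac{3k+\sqrt{12k-3}+1}{k+\sqrt{k}} < 108^{1/4}$ (holding for $k \geq 7$), with the same fallback to chain comparisons for small $k$ and direct side-count evaluation for $k = 3,\dots,6$ in the square--hexagon case. Your observation about where the spiral coincides with a chain ($k \leq 5$ for triangles, $k \leq 2$ for squares and hexagons) matches the paper exactly.
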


\section{Platonic Solid Pens}\label{sec:platonic}
Moving up a dimension, suppose we want to create 3-dimensional enclosures with shared surfaces to save on materials.  Mimicking our 2-dimensional efforts with regular polygons, we might restrict ourselves to using the platonic solids (tetrahedron, cube, octahedron, dodecahedron, and icosahedron).  As above, we'll refer to the total volume of an arrangement of $k$ identical solids using the notation $V_k(f)$ where $f$ counts the number of faces in a single copy.  The formulas for volume and surface area in terms of side length are well-known and allow us to represent the volume of each solid in terms of its surface area, $A$.  In each case, the volume of a single solid is proportional to $A^{3/2}$, thus, for simplicity in what follows we will use $q_f$ to denote the reciprocal of the proportionality constant so that $V_1(f) = \frac{A^{3/2}}{q_f}$ for each solid ($f = 4$, $6$, $8$, $12$, or $20$).  Here are the specific proportionality constants:
$$\text{(tetrahedron)~} q_4 = 6\sqrt{6\sqrt{3}}, \qquad \text{(cube)~} q_6 = 6\sqrt{6}, \qquad \text{(octahedron)~} q_8 = 6\sqrt{3\sqrt{3}}$$
$$\text{(dode)~} q_{12} = \frac{6\sqrt{10-2\sqrt{5}} \cdot (225+90\sqrt{5})^{1/4}}{3+\sqrt{5}},\quad \text{~(ico)~} q_{20} = \frac{12\sqrt{15\sqrt{3}}}{3+\sqrt{5}}$$
Observe that $q_4 > q_6 > q_8 > q_{12} > q_{20}$, thus the icosahedron gives the best ratio of volume to surface area for a single pen, followed by a dodecahedron, octahedron, cube, and tetrahedron.  Just as before though, if we create a chain of $k$ such pens 
with each pen sharing a single face with its neighbors, then there will be a trade off between enclosing more volume and sharing more surface area.

The total surface area for a $k$-length chain is exactly $\frac{kf-(k-1)}{f}A$, where $f$ is the number of faces in the given solid and $A$ is the surface area of a single solid.  
Thus, given a fixed amount of total surface area, $T$, to work with, we may split that surface area up so that each individual solid gets $A = \frac{Tf}{k(f-1)+1}$ (double counting the shared faces).  Using the surface area $A$, we can then calculate the volume of a single solid and scale by $k$ to obtain the total volume for the entire chain.  
For example, with a chain of two tetrahedra ($k=2$, $f=4$), we have $A = \frac{4T}{7}$ and therefore the total volume is $V_2(4) = 2 \cdot \frac{\left(\frac{4T}{7}\right)^{3/2}}{6\sqrt{6\sqrt{3}}} = \frac{8T^{3/2}}{21\sqrt{42\sqrt{3}}}$.

When comparing two different solids with $f$ and $F$ faces respectively, one can rearrange the inequalities as in the previous sections so that:
$$V_k(f) < V_k(F) 
\iff \frac{k(F-1)+1}{k(f-1)+1} < \frac{F}{f}\left(\frac{q_f}{q_F}\right)^{2/3}.$$
This leaves us with a constant on the right of the inequality and a function of $k$ which is increasing for all $k \geq 1$ when $F>f$ and decreasing for all $k \geq 1$ when $F<f$.  Hence, once again, there is at most one value of $k$ for which the two sides are equal.

For example, when comparing the tetrahedron and the cube, we have $V_k(4) < V_k(6)$ if and only if $\frac{5k+1}{3k+1} < \frac{6}{4}\left(\frac{6\sqrt{6\sqrt{3}}}{6\sqrt{6}}\right)^{2/3} = \frac{3^{7/6}}{2}$, which is true for all $k \geq 1$.  On the other hand, $\frac{7k+1}{5k+1} < \left(\frac{4}{3}\right)^{7/6}$ when $k= 67$ and $\frac{7k+1}{5k+1} > \left(\frac{4}{3}\right)^{7/6}$ when $k=68$, so (reporting integer values only) $V_k(6) < V_k(8)$ if and only if $k \leq 67$.  Similar arguments show that $V_k(8) < V_k(12)$ for all $k \geq 1$, that $V_k(6) < V_k(12)$ for all $k \geq 1$, that $V_k(12) < V_k(20)$ if and only if $k \leq 8$, that $V_k(8) < V_k(20)$ for all $k \geq 1$, and that $V_k(6) < V_k(20)$ for all $k \geq 1$.  Hence:

\begin{thm}
When given a fixed amount of surface area to create a chain of $k$ equal sized platonic solid pens, the choices can be ordered by volume contained (from most to least) as follows:
\[\emph{
\begin{tabular}{ccc}
\toprule
\textbf{For} $\bm{k \leq 8}$ & \textbf{For} $\bm{9 \leq k \leq 67}$ & \textbf{For} $\bm{k \geq 68}$\\
\midrule
Icosahedron & Dodecahedron & Dodecahedron\\
Dodecahedron & Icosahedron & Icosahedron\\
Octahedron & Octahedron & Cube\\
Cube & Cube & Octahedron\\
Tetrahedron & Tetrahedron & Tetrahedron\\
\bottomrule
\end{tabular}
}
\]
\end{thm}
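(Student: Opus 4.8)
The plan is to reduce the theorem to the finite set of pairwise comparisons between the five solids and then read off the total orders. For an unordered pair of face-counts $f<F$ drawn from $\{4,6,8,12,20\}$, the displayed equivalence says $V_k(f)<V_k(F)$ exactly when $g_{f,F}(k):=\frac{k(F-1)+1}{k(f-1)+1}<c_{f,F}:=\frac{F}{f}\bigl(\tfrac{q_f}{q_F}\bigr)^{2/3}$. A one-line quotient-rule computation gives $g_{f,F}'(k)=\frac{F-f}{(k(f-1)+1)^2}>0$, so $g_{f,F}$ is strictly increasing, rising from $g_{f,F}(1)=\frac{F}{f}$ toward the asymptote $\frac{F-1}{f-1}$; and since $q_f>q_F$ whenever $f<F$, we have $c_{f,F}>\frac{F}{f}=g_{f,F}(1)$, so the $k=1$ comparison always goes the expected way and the only issue is whether, and where, $g_{f,F}$ overtakes $c_{f,F}$.

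First I would handle the eight pairs for which $g_{f,F}$ never catches up: for $(4,6)$, $(4,8)$, $(4,12)$, $(4,20)$, $(6,12)$, $(6,20)$, $(8,12)$, $(8,20)$ I would verify that the asymptote already satisfies $\frac{F-1}{f-1}\le c_{f,F}$ — equivalently $\bigl(\tfrac{f(F-1)}{F(f-1)}\bigr)^{3/2}\le\tfrac{q_f}{q_F}$ — so that $g_{f,F}(k)<c_{f,F}$ for every $k$, hence $V_k(f)<V_k(F)$ for all $k\ge 1$. (The four tetrahedron pairs just record that the tetrahedron chain always encloses the least, as one would expect from its having both the worst single-solid ratio and the fewest faces.) Then I would treat the two remaining pairs, $(6,8)$ and $(12,20)$, where $\frac{F-1}{f-1}>c_{f,F}$: here $g_{f,F}$ crosses $c_{f,F}$ once, at some real $k_0$, so $V_k(f)<V_k(F)$ iff $k\le\lfloor k_0\rfloor$, and I would identify $\lfloor k_0\rfloor$ by checking the two-sided inequality $g_{f,F}(m)<c_{f,F}<g_{f,F}(m+1)$ at $m=67$ for cube versus octahedron (with $c_{6,8}=(4/3)^{7/6}$) and at $m=8$ for dodecahedron versus icosahedron.

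With all ten comparisons in hand the conclusion is bookkeeping: on each of the ranges $k\le 8$, $9\le k\le 67$, $k\ge 68$ the inequalities are mutually consistent and pin down a single linear order. For $k\le 8$ one gets $V_k(4)<V_k(6)<V_k(8)<V_k(12)<V_k(20)$; passing to $9\le k\le 67$ flips only the dodecahedron/icosahedron pair, giving $V_k(20)<V_k(12)$ with everything else unchanged; and passing to $k\ge 68$ also flips the cube/octahedron pair, yielding $V_k(4)<V_k(8)<V_k(6)<V_k(20)<V_k(12)$. Translating faces back to names reproduces the three columns of the table verbatim.

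The hard part will be the rigor of the two threshold computations, since the relevant windows are tiny. For $(6,8)$ one must sandwich $(4/3)^{7/6}$ strictly between $\frac{7\cdot 67+1}{5\cdot 67+1}=\frac{470}{336}$ and $\frac{7\cdot 68+1}{5\cdot 68+1}=\frac{477}{341}$, an interval of width under $3\times10^{-5}$, so a casual decimal estimate will not do — one wants a tight rational bound, most cleanly obtained by raising all three quantities to the sixth power to clear the exponent and then comparing the resulting rationals. For $(12,20)$ the window $\bigl(\tfrac{153}{89},\tfrac{172}{100}\bigr)$ is looser, but $c_{12,20}=\tfrac{5}{3}(q_{12}/q_{20})^{2/3}$ carries the nested radicals $\sqrt{10-2\sqrt5}$ and $(225+90\sqrt5)^{1/4}$, so a rigorous argument means propagating sharp rational bounds for $\sqrt5$ through those radicals, again after clearing the fractional exponents. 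Everything else — the monotonicity, the asymptote comparisons, and the assembly of the orders — is mechanical.
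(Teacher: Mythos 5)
Your proposal is correct and follows essentially the same route as the paper: the same pairwise equivalence $V_k(f)<V_k(F) \iff \frac{k(F-1)+1}{k(f-1)+1} < \frac{F}{f}\left(\frac{q_f}{q_F}\right)^{2/3}$, the same monotonicity-in-$k$ observation reducing each pair to locating a single crossing, the same threshold checks at $k=67$ (cube vs.\ octahedron) and $k=8$ (dodecahedron vs.\ icosahedron), and the same assembly into the three columns. Your two small refinements --- certifying the ``true for all $k$'' pairs by comparing the constant against the horizontal asymptote $\frac{F-1}{f-1}$, and explicitly treating all ten pairs rather than the seven the paper lists (in particular the tetrahedron--octahedron comparison, which is genuinely needed to keep the tetrahedron last once the cube and octahedron swap at $k\ge 68$ and does not follow by transitivity from the paper's stated inequalities) --- make the argument slightly tighter than the published one.
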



 
\begin{rmk}[Spheres]
The volume of a sphere in terms of its surface area is $\frac{A^{3/2}}{6\sqrt{\pi}}$, thus, for a chain of $k$ spheres, we have $A = \frac{T}{k}$ and hence $V_k(\infty) = k \cdot \frac{(\frac{T}{k})^{3/2}}{6\sqrt{\pi}} = \frac{T^{3/2}}{6\sqrt{\pi k}}$.  Using a similar procedure, we observe that 
%
$V_k(12)<V_k(\infty)$ when 
$\frac{k}{11k+1} < \left(\frac{q_{12}}{144\sqrt{3\pi}}\right)^{2/3}$, which is true for all $k \geq 1$, and
%
$V_k(20) < V_k(\infty)$ when 
$\frac{k}{19k+1} < \left(\frac{q_{20}}{240\sqrt{5\pi}}\right)^{2/3}$, which is also true for all $k \geq 1$.  Thus, it is actually more efficient to use spheres -- in spite of the inability to share walls -- than to use any of the platonic solids!
\end{rmk}

\section*{Conclusions and Future Research}
The author's initial question was whether -- regardless of the design/arrangement of the ($n$-dimensional) pens -- it is optimal for the walls in each of the $n$ directions to receive exactly $\frac{1}{n}$ of the allotted boundary.  Corollary~\ref{cor:rect} proves that this is the case for arrangements of rectangular pens for all $n \geq 2$.  In studying the related question of how to maximize the enclosed space using regular polygons and platonic solids, the regularity makes this initial question somewhat nonsensical as the side lengths cannot be adjusted to take better advantage of the shared sides/faces.  If we allow ourselves to consider non-regular shapes however, then the initial question again becomes more relevant.  For example, it is well-known (see e.g.\ \cite{Chakerian}) that for a single $n$-sided shape, the area is maximized when that shape is regular (hence each direction gets an equal amount of the perimeter).  The author has already begun to study the case of multiple, non-regular triangles, where the answer appears to depend on the particular arrangement, and hopes to share this work in a future installment.

\end{document}